\newtheorem{theorem}{Theorem}
\newtheorem{proposition}[theorem]{Proposition}
\newtheorem{corollary}[theorem]{Corollary}
\theoremstyle{remark}
\author{
\href{https://orcid.org/0009-0009-7031-9993}
{\includegraphics[scale=0.06]{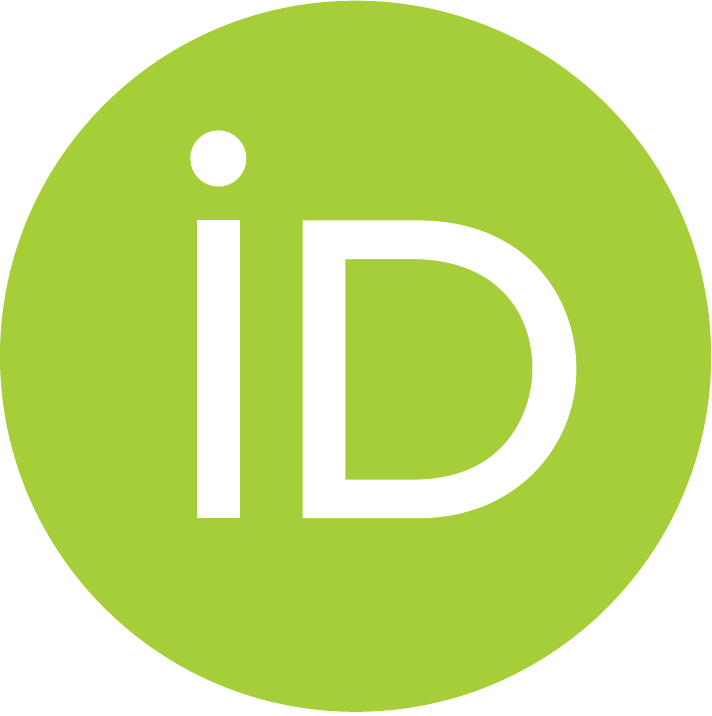}}\hspace{1mm}Gennesaret Tjusila\\
\texttt{tjusila@campus.tu-berlin.de} \\
\And
\href{https://orcid.org/0000-0002-6284-3033}{\includegraphics[scale=0.06]{orcid_id_icon.pdf}}\hspace{1mm}Mathieu Besançon\footnotemark[2]\\
\texttt{besancon@zib.de} \\
\And
\href{https://orcid.org/0000-0001-7270-1496}{\includegraphics[scale=0.06]{orcid_id_icon}}\hspace{1mm}Mark Turner\footnotemark[1]\hspace{2mm}\thanks{Zuse Institute Berlin, Department of Mathematical Optimization, Takustr. 7, 14195 Berlin} \\
\texttt{turner@zib.de} \\
\And
\href{https://orcid.org/0000-0002-1967-0077}{\includegraphics[scale=0.06]{orcid_id_icon.pdf}}\hspace{1mm}Thorsten Koch\footnotemark[1]\hspace{2mm}\footnotemark[2] \\
\texttt{koch@zib.de}
}
\title{How Many Clues To Give? A Bilevel Formulation For The Minimum Sudoku Clue Problem}
\begin{document}
\maketitle
\begin{abstract}
It has been shown that any 9 by 9 Sudoku puzzle must contain at least 17 clues to have a unique solution. This paper investigates the more specific question: given a particular completed Sudoku grid, what is the minimum number of clues in any puzzle whose unique solution is the given grid? We call this problem the Minimum Sudoku Clue Problem (MSCP).
We formulate MSCP as a binary bilevel linear program, present a class of globally valid inequalities, and provide a computational study on 50 MSCP instances of 9 by 9 Sudoku grids. Using a general bilevel solver, we solve 95\% of instances to optimality, and show that the solution process benefits from the addition of a moderate amount of inequalities. Finally, we extend the proposed model to other combinatorial problems in which uniqueness of the solution is of interest.
\end{abstract}

\section{Introduction}
The Sudoku puzzle first appeared in the May 1979 edition of \emph{Dell Pencil Puzzle and Word Games} \cite{jD06}. Given a square integer $n$, the puzzle is given on a $n \times n$ grid divided into $n$ subgrids each of size $\sqrt{n} \times \sqrt{n}$.
As input, some cells are already filled with numbers between 1--n. The goal of the puzzle is to fill the rest of the cells such that each number between 1--n appears exactly once in each row, column, and subgrid. An example of a Sudoku puzzle along with its solution is given in Figure~\ref{F:SudokuSample}. For most Sudoku puzzles, uniqueness of the solution is a desirable property. We call such puzzles \emph{valid}. It is fairly easy to construct examples of $9 \times 9$ Sudoku puzzles with $77$ clues and multiple solutions (such as removing the entries marked in green in Figure~\ref{F:UnavoidableSets}). One can also observe that any puzzle with at least $78$ clues will always have a unique solution.

A natural question that arises is: what is the minimum number of clues that a valid puzzle can have? It is shown in \cite{gM14} that the answer to this question is $17$ clues. But what if the puzzle designer already has a solution grid in mind? This motivates the Minimum Sudoku Clue Problem (MSCP): what is the minimum number of clues on any valid puzzle for a given Sudoku grid?

In this paper, we make four key contributions. First, we formulate the MSCP as a binary bilevel linear program, allowing the use of generic integer bilevel methods and solvers, which to the best of our knowledge is a first in the literature. Second, we present unavoidable set inequalities, a set of globally valid inequalities, which we add at the start of the solving process to improve solver performance. Third, we provide computational results over a set of Sudoku grids to show the viability of our approach. Finally, we generalize our model to other problems which fulfill some assumption in the Fewest Clue Problem (FCP) class introduced in \cite{eD18}.
We note that this paper is an extension of the first author's thesis work \cite{gT22}.

\begin{figure}[h]
    \begin{center}
        \begin{subfigure}[b]{.48\linewidth}
        \begin{center}
        \begin{tikzpicture}[scale=0.5, every node/.style={scale=0.6}]
        \input{sudokusample}
        \end{tikzpicture}
        \end{center}
        \caption{A Sudoku Puzzle}\label{F:SudokuPuzzle}
        \end{subfigure}
        \begin{subfigure}[b]{.48\linewidth}
        \begin{center}
        \begin{tikzpicture}[scale=0.5, every node/.style={scale=0.6}]
        \input{unavoidablesets}
        \end{tikzpicture}
        \end{center}
        \caption{The Solution Grid and its Unavoidable Sets}\label{F:UnavoidableSets}
        \end{subfigure}
        \caption{A Sudoku puzzle along with its solution and unavoidable sets}\label{F:SudokuSample}
    \end{center}
\end{figure}

\section{Related Work} \label{S:RelatedWork}

The problem of counting the total number of $n \times n$ Sudoku grids is an open problem. For $n=9$, it was shown in \cite{bF06} that the number of Sudoku grids is around $6.671 \times 10^{21}$. A natural upper bound arises by considering that Sudoku grids are a subset of Latin squares with additional subgrid constraints. The enumeration of Latin squares has been extensively studied in the literature \cite{bM05}, which has pushed similar studies for Sudoku \cite{dB18, aH07}. Many of these grids are equivalent under transformations such as relabeling of digits and rotations. We call the lexicographically smallest Sudoku grid that is equivalent to a given grid under these transformations the \emph{minlex form} of the Sudoku grid \cite{cL12}.
Taking these transformations into account, the number of $9 \times 9$ Sudoku grids is reduced to around $5.47 \times 10^{9}$ \emph{essentially different} grids \cite{eR06}.
While our work focuses on the minimum number of clues for a \emph{given} Sudoku grid, the minimum number of clues for \emph{any} Sudoku grid has been shown to be $17$ through a computer-assisted proof~\cite{gM14}.
A list of nearly $50000$ Sudoku puzzles with $17$ clues is collected by Gordon Royle \cite{gR05}. This collection is only a fraction of the possible number of Sudoku grids, heavily suggesting that most Sudoku grids do not have a $17$ clue valid puzzle.
Minimum bounds for the $4\times 4$ number of clues have been derived through an algebraic process in \cite{aF13} by encoding the combinatorial problem as a polynomial and analyzing its structure.
Research in this direction for $9 \times 9$ grids has focused on analyzing the underlying graph structure of the Sudoku grid \cite{jC14, gL22} and characterizing valid Sudoku puzzles using formal logic \cite{dM22}.

Finding a solution to a general $n \times n$ Sudoku puzzle is ASP-complete, which implies NP-completeness of the decision problem as well as \#P-completeness to count the solutions \cite{tY03}. However, practical methods for solving Sudoku puzzles of size $9 \times 9$ exist in the literature \cite{lC14}. There has also been recent research in making algorithms that solve Sudoku puzzles explainable for humans \cite{bJ08}.
Given a Sudoku grid, the decision problem ``is there a setting of at most $k$ clues such that the only solution is the given grid?" is a member of the class of problem ``Fewest Clue Problem" (FCP) and has been shown to be $\Sigma_2^P$-complete~\cite{eD18}. Mixed-integer bilevel linear programming has also been shown in \cite{sD12,rJ85} to be $\Sigma_2^P$-complete. Therefore, transforming MSCP into a binary bilevel linear program retains the same complexity but allows for a general solving method.

To the best of our knowledge, all existing software libraries for solving MSCP are problem-specific and created by the Sudoku community, see \cite{d23} for an example. The software uses pattern-matching algorithms to quickly find so-called \emph{unavoidable sets}, such as described in \cite{gM14}.
An unavoidable set is defined as a set of cells whose contents if removed will result in an invalid Sudoku puzzle. An example of such sets would be the cells marked in green, red, or blue in Figure~\ref{F:UnavoidableSets}.
Given a set of unavoidable sets $S$, we call a set of cells $H$ a \emph{hitting set} if for every unavoidable set in $S$ at least one cell is contained in $H$.
Once a large enough set of unavoidable sets has been generated, one can enumerate over all hitting sets of these unavoidable sets, starting from ones with minimal cardinality until a valid puzzle is found.
Although in theory, enumerating unavoidable sets is expensive, specialized algorithms are often fast in practice owing to additional problem-specific methods, e.g.~exploiting equivalence classes of Sudoku grids. We also highlight that enumeration of hitting set, in particular minimal hitting set, is an active area of research \cite{aG17}.
In contrast to existing software, our work uses a general mathematical optimization approach to solve MSC. We will use integer linear programming models to find unavoidable sets and generate valid inequalities to speed up the bilevel-solving process.

\section{Integer Bilevel Linear Formulations of Minimum Sudoku Clue Problem}\label{sec:bilevel}

We now formulate MSCP for a Sudoku grid of size $n \times n$ where $n$ is a square number. Let $x_{ijk}$ be a set of binary decision variables where $i,j,k \in [n] := \{1, \dots, n\}$. The variable $x_{ijk}$ takes value one if cell $(i,j)$ has entry $k$ and zero otherwise. The variables construct an $n \times n$ Sudoku grid if they satisfy
\begin{align*}
     & \sum_{k = 1}^n x_{ijk} = 1, & \quad \forall\ i,j \in [n]\tag{$G0$}\\
     & \sum_{j = 1}^n x_{ijk}  = 1, & \quad\forall\ i,k \in [n]\tag{$G1$}\\
     & \sum_{i = 1}^n x_{ijk} = 1, & \quad\forall\ j,k \in [n] \tag{$G2$}\\
    & \sum_{\substack{i =sp-s+1}}^{sp} \sum_{j = sq-s+1}^{sq} x_{ijk} = 1, &\quad
\forall\ p,q\in [s] \text{ and } k \in [n]\tag{$G3$}\\
\end{align*}
where $s := \sqrt{n}$.
This is the standard Sudoku integer linear program (ILP) formulation found in the literature, see \cite{tH15,tK05}.

Let $G$ be a Sudoku grid given as an $n \times n$ matrix with entries in $[n]$. The leader problem of our binary bilevel linear program will act as a ``puzzle setter'', and determine which entries of the Sudoku grid are given as clues. The follower problem will act as an ``adversary'' that tries to find a solution different from the given Sudoku grid. Concretely, our model is as follows
\begin{align*}
     \min_{x,y,z} \quad&\sum_{i=1}^n\sum_{j=1}^ny_{ij}\\
    \text{s.t.} \quad&z = 1 \tag{$V1$}\\
    &y_{ij} \in \{0,1\},\quad\forall\ i,j \in [n]\\
    &(x,z)\in S(y) 
\end{align*}
where $S(y)$ is the set of optimum solutions to the $y$-parameterized follower problem
\begingroup
\allowdisplaybreaks
\begin{align*}
     \min_{x,z}\quad&z\\
    \text{s.t.}\quad&\text{$(G0)-(G3)$}\\
    &x_{ijG_{ij}} \geq y_{ij}, \quad \forall\ i,j \in [n]\tag{$F1$}\\
    &\sum_{i = 1}^n \sum_{j = 1}^n x_{ijG_{ij}} - z\leq n^2-1 \tag{$N1$}\\
    &x_{ijk},z \in \{0,1\}, \quad\forall\ i,j,k \in [n].
\end{align*}
\endgroup

 The leader decision variable, $y_{ij}$, determines whether the entry of a cell $(i,j)$ is given to the follower problem as a clue. The objective function of the leader problem is the number of clues given. Constraint $(F1)$ requires the follower problem to adhere to these given clues. Constraint $(N1)$ requires that the Sudoku grid defined by the set of decision variables $x_{ijk}$ with $i,j,k \in [n]$ is different from $G$.
 This constraint can be relaxed by setting $z$ to one and taking a penalty. The intuition of the leader constraint $(V1)$ is as follows: The objective of the follower is to minimize this penalty. If the puzzle determined by the leader problem has multiple solutions, the follower can find a feasible solution with a penalty of zero. However, if this is not possible, then the puzzle determined by the leader is a valid puzzle and the only option the follower has is to take the penalty.
 
Finally, we highlight that the high-point relaxation is always trivially achieved by setting $z = 1$, $y_{ij}= 0$ for all $i,j \in [n]$ and $x$ to be another Sudoku grid not equal to $G$, by permuting digits for instance.
This weakness of the relaxation suggests the hardness of the bilevel problem.

\section{Strengthening The Bilevel Formulation Through Valid Inequalities}

Consider the Sudoku grid given in Figure~\ref{F:UnavoidableSets}. We can swap the $3$'s and $8$'s in the green marked cells to get a new Sudoku grid $G'$ that has the same entries except for the cells marked in green. Thus, any valid puzzle $P$ must have at least one clue in one of the green-marked cells. Similarly, we observe that it is possible to change the entries of cells marked in blue or red. Thus, there must also be at least one clue in the cells marked red and one clue in the cells marked blue. We call a set of cells $U$ an \emph{unavoidable set} for a Sudoku grid $G$ if there exists a Sudoku grid $G' \neq G$ that differs from $G$ only on cells in $U$. We call an unavoidable set \emph{minimally unavoidable} if it contains no subset that is again unavoidable. In what follows, we represent Sudoku grids as $n \times n$ matrices with entries from $[n]$ and Sudoku puzzles as $n \times n$ matrices with entries from $[n] \cup \{0\}$ where $0$ marks an empty cell.

\begin{proposition}\label{P:UnavoidableSet}
Let $G$ be a grid and $P$ a puzzle such that $G$ is a solution of $P$. Then $P$ is a valid puzzle, if and only if, for every minimally unavoidable set $U$ of $G$ there exists a cell $(i,j) \in U$ that is given as a clue, i.e., $P_{ij} \neq 0$.
\end{proposition}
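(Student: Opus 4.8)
The plan is to prove both implications by contraposition, using the definition of (minimally) unavoidable sets together with the elementary observation that any two solutions of a puzzle $P$ must agree on every clue cell.

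For the ``only if'' direction I would argue the contrapositive. Suppose some minimally unavoidable set $U$ of $G$ contains no clue of $P$, i.e.\ $P_{ij}=0$ for all $(i,j)\in U$. By the definition of an unavoidable set there is a Sudoku grid $G'\neq G$ that differs from $G$ only on cells of $U$. For every clue cell $(i,j)$ of $P$ we then have $(i,j)\notin U$, so $G'_{ij}=G_{ij}=P_{ij}$; thus $G'$ respects all clues of $P$, and since $G'$ is a genuine Sudoku grid it satisfies $(G0)$--$(G3)$. Hence $G'$ is a solution of $P$ distinct from $G$, so $P$ is not valid. Note minimality of $U$ plays no role here — a minimally unavoidable set is in particular unavoidable.

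For the ``if'' direction I would again take the contrapositive: assume $P$ is not valid, so it admits a solution $G'\neq G$. Let $D=\{(i,j):G_{ij}\neq G'_{ij}\}$, which is nonempty. Then $G'$ witnesses that $D$ is an unavoidable set of $G$. Moreover no clue cell of $P$ lies in $D$, because on any clue cell both $G$ and $G'$ equal $P_{ij}\neq 0$. It remains to pass from $D$ to a \emph{minimally} unavoidable set contained in it: the family of unavoidable subsets of $D$ is nonempty (it contains $D$) and finite, hence has an inclusion-minimal member $U\subseteq D$; since every proper subset of $U$ is also a subset of $D$, this minimality means $U$ has no unavoidable proper subset, i.e.\ $U$ is minimally unavoidable. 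As $U\subseteq D$ it contains no clue of $P$, so the stated hitting condition fails.

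The only place needing genuine care is this last step — making precise the fact that every unavoidable set contains a minimally unavoidable one, and checking that inclusion-minimality within the subsets of $D$ yields true minimal unavoidability — together with the bookkeeping that ``agreeing on all clue cells'' is both necessary for being a solution of $P$ (used in the second direction) and, combined with the grid constraints $(G0)$--$(G3)$, sufficient (used in the first). I do not anticipate a real obstacle: once these points are pinned down, the proposition is essentially the observation that validity of $P$ is exactly the statement that its clue set is a hitting set for the minimally unavoidable sets of $G$.
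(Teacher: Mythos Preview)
Your proposal is correct and follows essentially the same route as the paper: both directions are proved by contraposition, the difference set between two solutions is used to produce an unavoidable set disjoint from the clues, and finiteness is invoked to extract a minimally unavoidable subset. Your write-up is slightly more careful about why inclusion-minimality within subsets of $D$ gives genuine minimal unavoidability, but the argument is the same.
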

\begin{proof}
 To show sufficiency suppose that there exists a minimally unavoidable set $U$ such that ${P_{ij} = 0}$ for all cells $(i,j) \in U$. By definition there exists a Sudoku grid $G' \neq G$ which differs from $G$ only in the entries of cells that are in $U$. Since $P_{ij} = 0$ for all cells $(i,j) \in U$ then $G'$ is also a solution of $P$. Thus $P$ is not a valid puzzle.
 
 To show necessity, suppose that $P$ is not a valid puzzle and there exists a Sudoku grid $G'$ which is a solution of $P$ and $G' \neq G$. We define 
 \[
 U := \{ (i, j) \in \{1, \dots, n\}^2 \mid G'_{ij} \neq G_{ij} \}
 \]
 as the set of cells whose entry in $G$ is different from its entry in $G'$. By construction, $U$ is an unavoidable set and $P_{ij} = 0$ for all $(i,j) \in U$, as otherwise, their entries would be identical. If $U$ is minimally unavoidable then we are done, otherwise, a subset of $U$ is again unavoidable. Since $U$ is finite, we can iterate the process until we end up with a minimally unavoidable set.
\end{proof}

\begin{corollary}
Let $G$ be a Sudoku grid and $U$ be an arbitrary minimal unavoidable set. Then, the inequality
\begin{equation}
    \sum_{(i,j) \in U}y_{ij} \geq 1 \tag{U}
\end{equation}
is a globally valid inequality for the leader of our bilevel program. We call this inequality the \textbf{unavoidable set inequality} corresponding to $U$
\end{corollary}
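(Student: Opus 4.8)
The plan is to derive the corollary directly from Proposition~\ref{P:UnavoidableSet} by arguing contrapositively at the level of the leader's decision vector $y$. First I would recall the correspondence between a feasible leader assignment $y \in \{0,1\}^{n\times n}$ and the puzzle $P$ it induces on the grid $G$: namely $P_{ij} = G_{ij}$ whenever $y_{ij} = 1$ and $P_{ij} = 0$ whenever $y_{ij} = 0$. By construction $G$ is a solution of this puzzle $P$, so Proposition~\ref{P:UnavoidableSet} applies to the pair $(G,P)$.

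Next I would make precise what ``globally valid inequality for the leader'' means in the bilevel setting: every $y$ that extends to a bilevel-feasible point $(x,y,z)$ must satisfy the inequality. The key observation is that bilevel feasibility forces $z = 1$ via constraint $(V1)$, and by the discussion following the model, $z=1$ in an optimal follower response exactly encodes that the puzzle $P$ induced by $y$ is valid (has $G$ as its unique solution). So if $y$ is bilevel-feasible, then $P$ is a valid puzzle. Applying Proposition~\ref{P:UnavoidableSet} to the minimally unavoidable set $U$, there exists $(i,j)\in U$ with $P_{ij}\neq 0$, hence $y_{ij} = 1$, which gives $\sum_{(i,j)\in U} y_{ij} \geq 1$. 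Since this holds for every bilevel-feasible $y$, the inequality $(U)$ is globally valid.

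I would also note the mild gap to close: Proposition~\ref{P:UnavoidableSet} is stated for a general puzzle $P$ whose solution is $G$, so one must check that the puzzle induced by a leader assignment $y$ indeed has $G$ as a solution — which is immediate since $P$ agrees with $G$ on all nonzero entries — and that the minimal-unavoidability hypothesis in the corollary matches the hypothesis of the proposition (it does, verbatim). A secondary point worth a sentence is that validity is ``global'' in the branch-and-bound sense: the argument never used any branching restrictions on $y$, so the inequality may be added at the root and remains valid throughout.

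The main obstacle, such as it is, is purely expository rather than mathematical: one must articulate cleanly why $z=1$ at a bilevel-optimal follower response is equivalent to $P$ being valid, since that equivalence is where constraints $(N1)$, $(F1)$, and the follower objective interact. Concretely, if $P$ were not valid there would be a second grid $G'\neq G$ consistent with $P$, and setting $x$ to encode $G'$ satisfies $(F1)$ and makes the left-hand side of $(N1)$ at most $n^2-1$ with $z=0$, contradicting optimality of $z=1$; conversely if $P$ is valid the only $x$ satisfying $(G0)$–$(G3)$ and $(F1)$ is $G$ itself, which forces $z=1$ through $(N1)$. Once this is spelled out, the corollary follows in one line from the proposition.
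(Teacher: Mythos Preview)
Your proposal is correct and follows exactly the approach the paper intends: the corollary is stated in the paper without proof, as an immediate consequence of Proposition~\ref{P:UnavoidableSet}, and your argument fills in precisely the details that make this implication work (the correspondence between $y$ and the induced puzzle $P$, and why bilevel feasibility with $z=1$ forces $P$ to be valid). There is nothing to add or correct.
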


We give a method to generate the set of unavoidable sets $\mathcal{U}$. Let $m \in \mathbb{N}$ with $m\geq 1$. Consider the \text{$m$-parameterized} integer linear program,
\begin{alignat*}{3}
     \min_x &\quad0\\
    \text{s.t.}&\quad\text{$(G0)-(G3)$} \\
     &\sum_{i = 1}^n \sum_{j = 1}^n x_{ijG_{ij}} = n^2 - m \tag{$D1$}\\
     &x_{ijk} \in \{0,1\},\quad \forall\ i,j,k \in [n],
\end{alignat*}
The integer linear program gives us a Sudoku grid $G'$ which differs from $G$ in exactly $m$ entries. We get that
\[
 U := \{ (i, j) \in \{1, \dots, n\}^2 \mid G_{ij} \neq G'_{ij} \}.
\]
is an unavoidable set of G by construction. We start with $m = 1$ and repeatedly solve the ILP, adding in each iteration the no-good cut constraint
\begin{equation}
    \sum_{(i,j) \in U}x_{ijG_{ij}} \geq 1 \tag{N2}
\end{equation}
which bars the ILP from returning any $G'$, whose associated unavoidable set is a superset of $U$. The ILP will thus return a different unavoidable set of size $m$ in each iteration. Once all unavoidable sets of size $m$ have been generated, we move on to $m+1$. Note that we could have equivalently formulated this as a minimization problem.

\begin{proposition}
For the procedure described above it applies
\begin{enumerate}[(i)]
    \item At each iteration, the resulting unavoidable set will always be a minimally unavoidable set
    \item Repeating the procedure eventually yields all minimal unavoidable sets
\end{enumerate}
\end{proposition}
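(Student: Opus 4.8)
The plan is to prove (i) and (ii) simultaneously by strong induction on the size parameter $m$, where ``phase $m$'' denotes the block of iterations the procedure performs with $(D1)$ set to $n^2-m$. The invariant I would carry is: \emph{at the instant phase $m$ terminates, the set of no-good cuts $(N2)$ added so far is exactly $\{\,\sum_{(i,j)\in U}x_{ijG_{ij}}\ge 1 \;:\; U \text{ minimally unavoidable for } G,\ |U|\le m\,\}$, and phase $m$ consisted of finitely many iterations}. Granting this, (i) is established along the way (every set handed back by the ILP is shown to be minimal), and (ii) follows because the procedure only ever adds the cut of a set it has just returned, so the invariant forces every minimally unavoidable set of size $m$ to have been returned during phase $m$. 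The base case $m=1$ is vacuous: if a Sudoku grid $G'$ agreed with $G$ on all but one cell of a row, agreement on $n-1$ of the $n$ entries of a permutation forces the last, so there are no unavoidable sets of size $1$, the phase-$1$ ILP is infeasible from the outset, and the invariant holds with an empty set of cuts.

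For the inductive step I would first prove (i). When the phase-$m$ ILP returns a grid $G'$, constraint $(D1)$ makes $U:=\{(i,j):G_{ij}\ne G'_{ij}\}$ have exactly $m$ cells, and it is unavoidable by construction. If $U$ were not minimal it would properly contain a minimally unavoidable set $W$ (descend to a minimal element of the finite, nonempty family of unavoidable subsets of $U$), whence $|W|<m$; by the induction hypothesis the cut of $W$ is already present, but $G'$ has $x_{ijG_{ij}}=0$ for every $(i,j)\in U\supseteq W$, so $G'$ violates that cut, contradicting that the ILP returned it. Hence every returned set is minimal, of size exactly $m$, and new (a re-returned set would violate its own earlier cut). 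It remains to see that phase $m$ returns \emph{all} minimally unavoidable sets of size $m$ and then stops. Each iteration strictly shrinks the finite pool of not-yet-returned minimal unavoidable sets of size $m$, so the phase is finite; conversely, whenever some such set $V$ is still missing, the grid $G_V$ that differs from $G$ exactly on $V$ satisfies $(G0)-(G3)$, $(D1)$, and every cut present so far: a cut of $U_0$ with $|U_0|<m$ cannot be violated, since $V\supseteq U_0$ would exhibit a proper unavoidable subset of the minimal set $V$; and a cut of $U_0$ with $|U_0|=m$ and $U_0\ne V$ cannot be violated, since $V\supseteq U_0$ with equal cardinalities would force $V=U_0$. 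Thus the ILP stays feasible exactly while a size-$m$ minimal unavoidable set is missing, so phase $m$ ends precisely when all of them have been returned, re-establishing the invariant.

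The step I expect to be the crux is the feasibility argument for $G_V$ inside the inductive step — in particular, ruling out that a cut coming from a \emph{strictly smaller} minimal unavoidable set could block a legitimate size-$m$ set. This is exactly where one must use that $V$ is minimally unavoidable and not merely unavoidable: if $V$ contained a smaller unavoidable set it would not be minimal. A second, minor point worth stating explicitly is that the phrase ``all unavoidable sets of size $m$'' in the informal description of the procedure should be read as ``all \emph{minimally} unavoidable sets of size $m$'', since part (i) guarantees the ILP never returns a non-minimal set in the first place.
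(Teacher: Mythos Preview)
Your argument is correct and follows the same core idea as the paper: a non-minimal returned set would contain a smaller minimal unavoidable set whose no-good cut is already present, a contradiction. The paper compresses this into two lines and dismisses (ii) as ``by construction''; you have made the underlying induction on $m$ explicit and, in particular, supplied the feasibility argument for $G_V$ that the paper omits, so your version is strictly more complete but not a different approach.
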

\begin{proof}
    To show (i), let $\bar{U}$ be an unavoidable set that is not minimal and $U \subset \bar{U}$ a minimal unavoidable set with $m := |U|$. When generating all unavoidable sets of size $m$, a no-good cut for $U$ will also be added to the formulation. Thus, any $G'$ which generates $\bar{U}$ will be infeasible because $U \subset \bar{U}$.
    We get (ii) by construction.
\end{proof}

\section{Computational Results}

In this section, we investigate the performance of our models for solving MSCP over $50$ instances of $9 \times 9$ Sudoku grids. All of our computations run on a single thread of an Intel Xeon E5-2630V4 2.2 GHz. A wall-clock time limit of 4 days and a memory limit of 16 GB was used for each run. The algorithm to generate unavoidable set cuts uses Gurobi~9.5.1 \cite{gurobi} as an ILP solver, and we use the bilevel solver from~\cite{mF17} to solve the main model, where the authors granted us a license upon request. The solver uses CPLEX~12.7 \cite{cplex} to solve linear programming relaxations. The code used for this section along with the computational results can be found in \url{https://github.com/gtjusila/minimum-sudoku}.

The 50 instances are split into two groups of 25. The first group is randomly selected from a list of Sudoku puzzles with $17$ clues \cite{gR17clue}. The second group is randomly selected from the list of Sudoku puzzles with a difficulty rating of more than $11$ (the maximum difficulty rating being $12$) maintained by the new Sudoku players forum\footnote{\url{http://forum.enjoysudoku.com/the-hardest-Sudokus-new-thread-t6539-600.html\#p277835}}. The known puzzles for all instances in this second group contain more than $20$ clues each. To get a diverse instance set, we also ensure that we select Sudokus with different minlex forms \cite{cL12}. To convert the Sudoku grids to minlex form, we use the code from \cite{mD23}.

First, we evaluate the performance of the unavoidable sets generating procedure. For each of our $9 \times 9$ instances, we generate $5000$ minimal unavoidable sets. In all instances, we generate all minimal unavoidable sets of size 16 or less. We observed no unavoidable sets of size $5$ and $7$, which leads us to conjecture that none exist for any instance. In 39 out of 50 instances, we generated all minimal unavoidable sets of size less than or equal to 17. We plot the geometric mean of the time needed to generate the kth unavoidable set over all $50$ of our instances in Figure~\ref{Fi:CutGenerationTime}. We see that generally, the time needed to generate an unavoidable set increases as $k$ gets larger. An interesting feature of the figure is the periodic peaks. Looking deeper into the result of individual instances, we see that as we try to enumerate all minimal unavoidable sets of size $n \in \mathbb{N}$, the time increases in each iteration. This is expected as in each iteration there are fewer and fewer minimal unavoidable sets of size $n$ available, and thus, they become increasingly hard to find. To visualize this effect, we computed the average number of unavoidable sets less than or equal to $n$ for $n = 11, \dots, 17$ (for the instance in which not all unavoidable sets of size $17$ have been found, we assume the number of unavoidable set of size less than $17$ to be $5000$) and drew them as vertical lines in Figure~\ref{Fi:CutGenerationTime}. One can think of these lines as the average point where an instance switches from searching for unavoidable sets of size $n$ to size $n+1$. The leftmost line represents $n = 11$.

\begin{figure}[]
    \begin{center}
        \begin{subfigure}[b]{.49\linewidth}
        \begin{center}
\includegraphics[width=\textwidth]{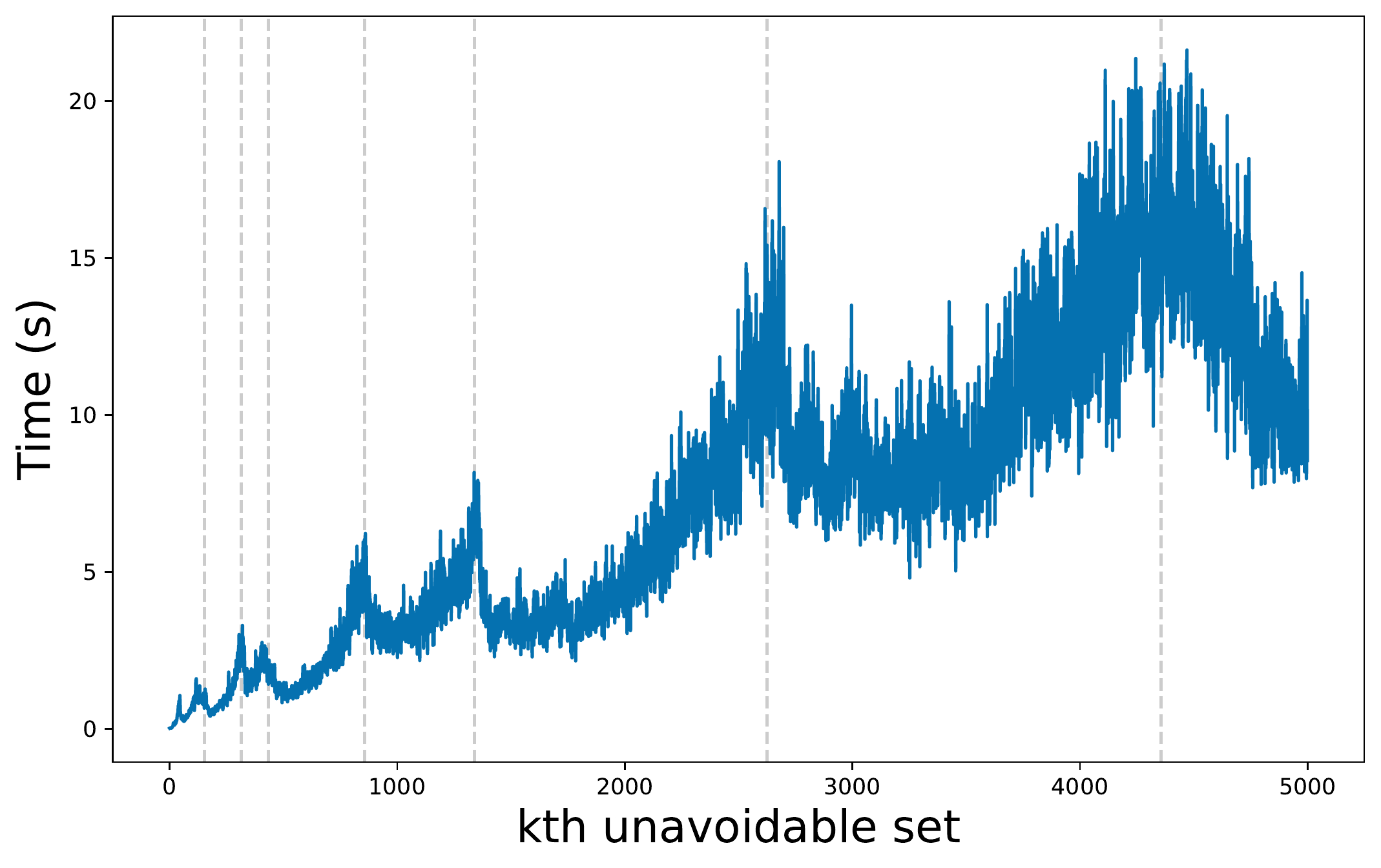}
\caption{Geometric mean of time needed to generate the kth unavoidable set}
\label{Fi:CutGenerationTime}
        \end{center}
        \end{subfigure}
        \begin{subfigure}[b]{.49\linewidth}
        \begin{center}\includegraphics[width=\textwidth]{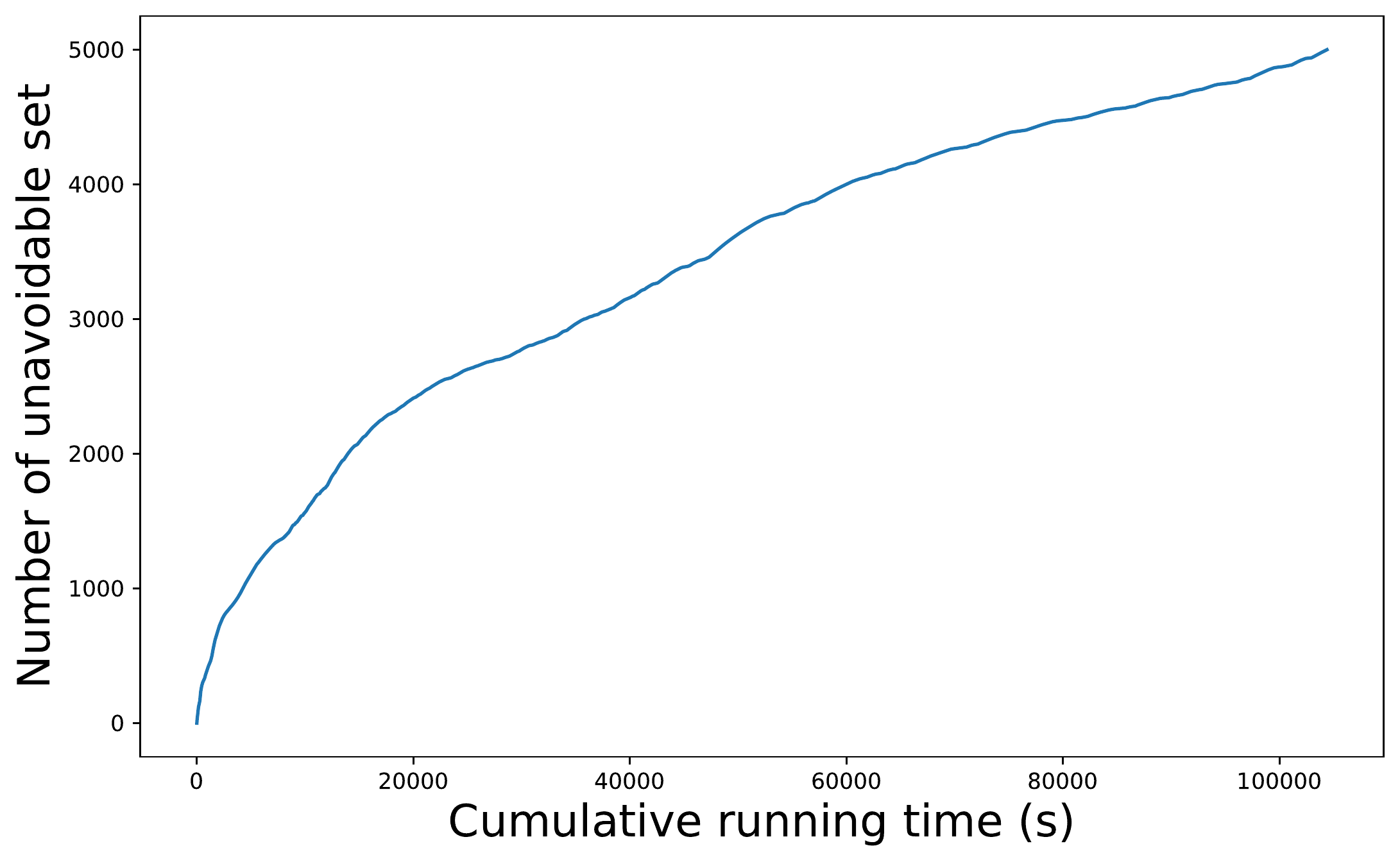}
\caption{The number of unavoidable set generated as a function of time}
\label{Fi:YieldCurve}
        \end{center}
        \end{subfigure}
    \end{center}
    \caption{Experiment Results For Cut Generation}
\end{figure}
\begin{figure}[]
\end{figure}
\begin{table}[h]
    \centering
\begin{tabular}{rr}
\toprule
 generation time [s] &  \# of unavoidable sets \\
\midrule
               $\leq1$ &       30763 \\
              $1-10$ &       136009 \\
              $10-30$ &       42389 \\
              $30-60$ &       26092 \\
             $60-300$ &        13809 \\
             $300-600$ &         542 \\
            $600-1800$ &         298 \\
            $1800-3600$ &          62 \\
            $3600-7200$ &          31 \\
           $\geq 7200$ &          5 \\
\bottomrule
\end{tabular}
    \caption{Frequency distribution table of the time needed to generate unavoidable set}
    \label{Tab:FrequencyDistribution}
\end{table}

Figure~\ref{Fi:CutGenerationTime} does not catch how extreme these peaks can be. To see this effect, we provide the frequency distribution table of the generation time of unavoidable sets in  Table~\ref{Tab:FrequencyDistribution}. Though the majority of the minimally unavoidable sets ($94.10\%$) can be generated in less than 1 minute, some minimal unavoidable sets are very hard to find with the longest taking nearly 3 hours to find.

Lastly, it is important to remember that we are not obliged to generate all unavoidable sets since their sole function is to help reduce the feasible region of our bilevel program and improve performance. For this reason, we find it helpful to plot the average number of cuts generated as a function of time. To do this, for each instance $I$ and each $n \in [5000]$, we calculate the cumulative time our model takes to generate $n$ unavoidable sets of instance $I$. We then take the geometric mean of the cumulative time for each $n$ over all the instances and plot the result as a function of $n$. The resulting plot is shown in Figure~\ref{Fi:YieldCurve}. The figure reiterates that generating unavoidable sets is quicker in the beginning and shows how it becomes more difficult over time. It takes less than $20000$ seconds to generate the first $2000$ unavoidable sets and nearly $40000$ seconds to generate the next $2000$.

We will now test the effect of unavoidable set inequalities on our model by varying the number of inequalities that are used. For our initial analysis, we do not take into account the time needed to generate the unavoidable sets. We decide to test 500, 1000, 3000, and 5000 unavoidable set inequalities, where we use the first $n$ inequalities generated by our unavoidable set generating algorithm. A summary of the optimization results is shown in Table~\ref{Tab:SummaryDifferentCuts}. 
$45$ out of the $50$ instances of size $9 \times 9$ solved to optimality in at least one solver setting. Interestingly, all instances in the $17$ clues puzzle group solve to optimality in at least one solver setting and they generally solve faster than the instance group with no $17$ clue puzzle, see Figure~\ref{Fi:BoxPlot17Clue}.

\begin{table}[h]
    \centering
    \begin{tabular}{r|rr}
\toprule
 &\multicolumn{2}{c}{\# of instances}\\
 \# of unavoidable set &optimal &time limit \\
\midrule
500 &37 &13 \\
1000 &43 &7 \\
3000 &36 &14 \\
5000 &29 &21\\
\bottomrule
\end{tabular}
    \caption{Summary of end result for $9 \times 9$ standard bilevel model with different number of unavoidable set cuts}
    \label{Tab:SummaryDifferentCuts}
\end{table}
We plot the resulting performance profile \cite{eD02} in Figure~\ref{Fi:PerformanceProfile9by9}. We observe that adding too few or too many inequalities results in slower optimization times. Nearly $60\%$ of the instances solve fastest on models that use $1000$ unavoidable set inequalities, followed by slightly under $20\%$ of instances that solve fastest on models that use $500$ unavoidable set inequalities. This claim is also supported when we see that we solve to optimality in most instances when we are using $1000$ unavoidable sets inequalities. By looking deeper into node-level data as presented in Table~\ref{Tab:AverageNodeCount}, we see that too few inequalities result in a huge increase of nodes processed to prove optimality, while too many inequalities result in a huge decrease in node throughput. The best choice is therefore likely to be in the middle.
\begin{table}[h]
    \centering
    \begin{tabular}{r|rrr}
\toprule
 \# of unavoidable set &node count &time per node [s] & total runtime [s]\\
\midrule
500       & 2503382 &          0.028 &  70312 \\
1000      & 2097150 &          0.031 &  64941 \\
3000      & 1126612 &          0.082 &  92931 \\
5000      &  887962 &          0.139 & 123472 \\
\bottomrule
\end{tabular}
\caption{Geometric average of node count, time per node, and runtime of different settings}
\label{Tab:AverageNodeCount}
\end{table}

\begin{figure}[]
    \begin{center}
        \begin{subfigure}[b]{.49\linewidth}
        \begin{center}
\includegraphics[width=\textwidth]{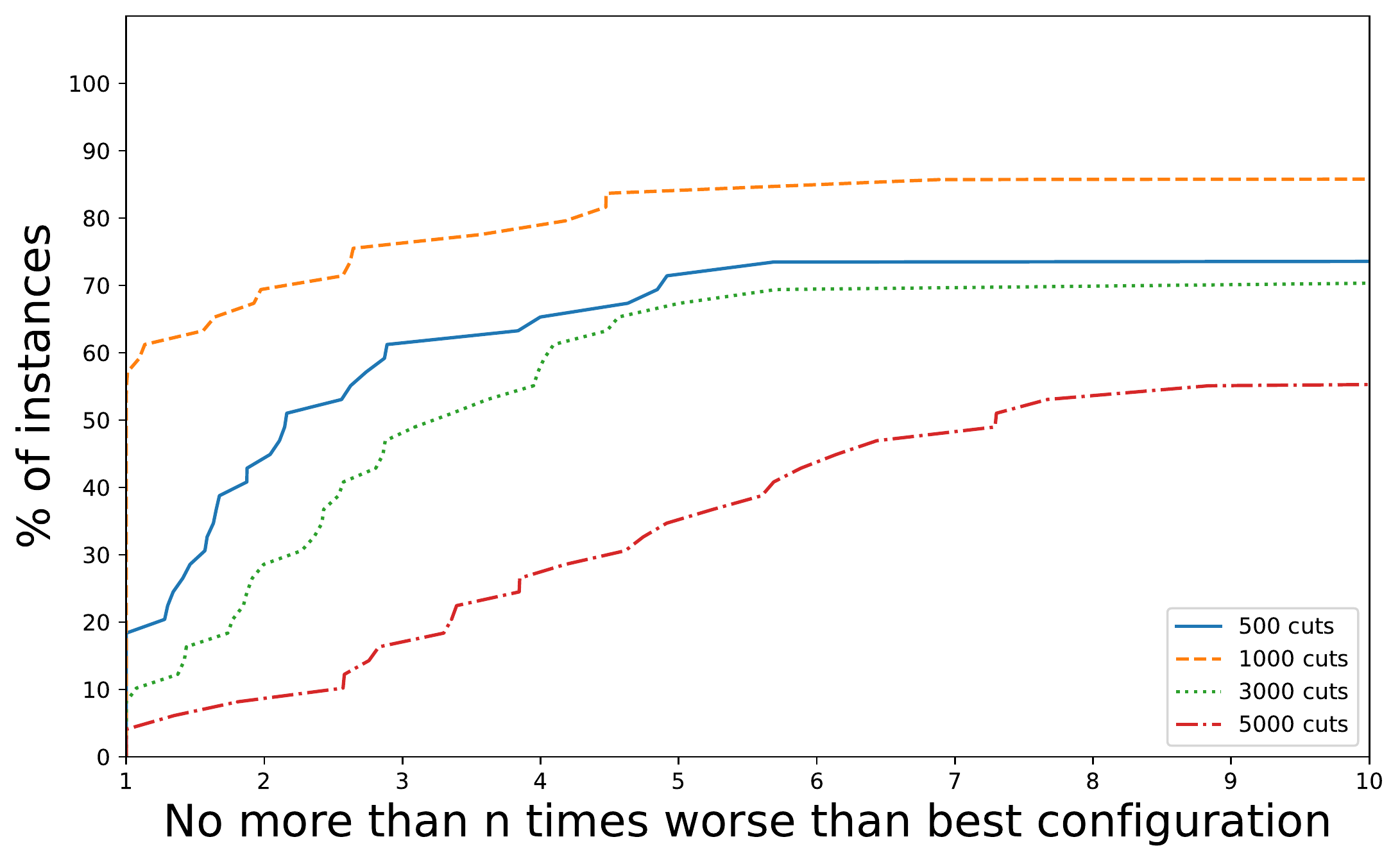}
        \caption{Performance profile of different number of unavoidable set on $9 \times 9$ instances}
        \label{Fi:PerformanceProfile9by9}

        \end{center}
        \end{subfigure}
        \begin{subfigure}[b]{.49\linewidth}
        \begin{center}
        \includegraphics[width=\textwidth]{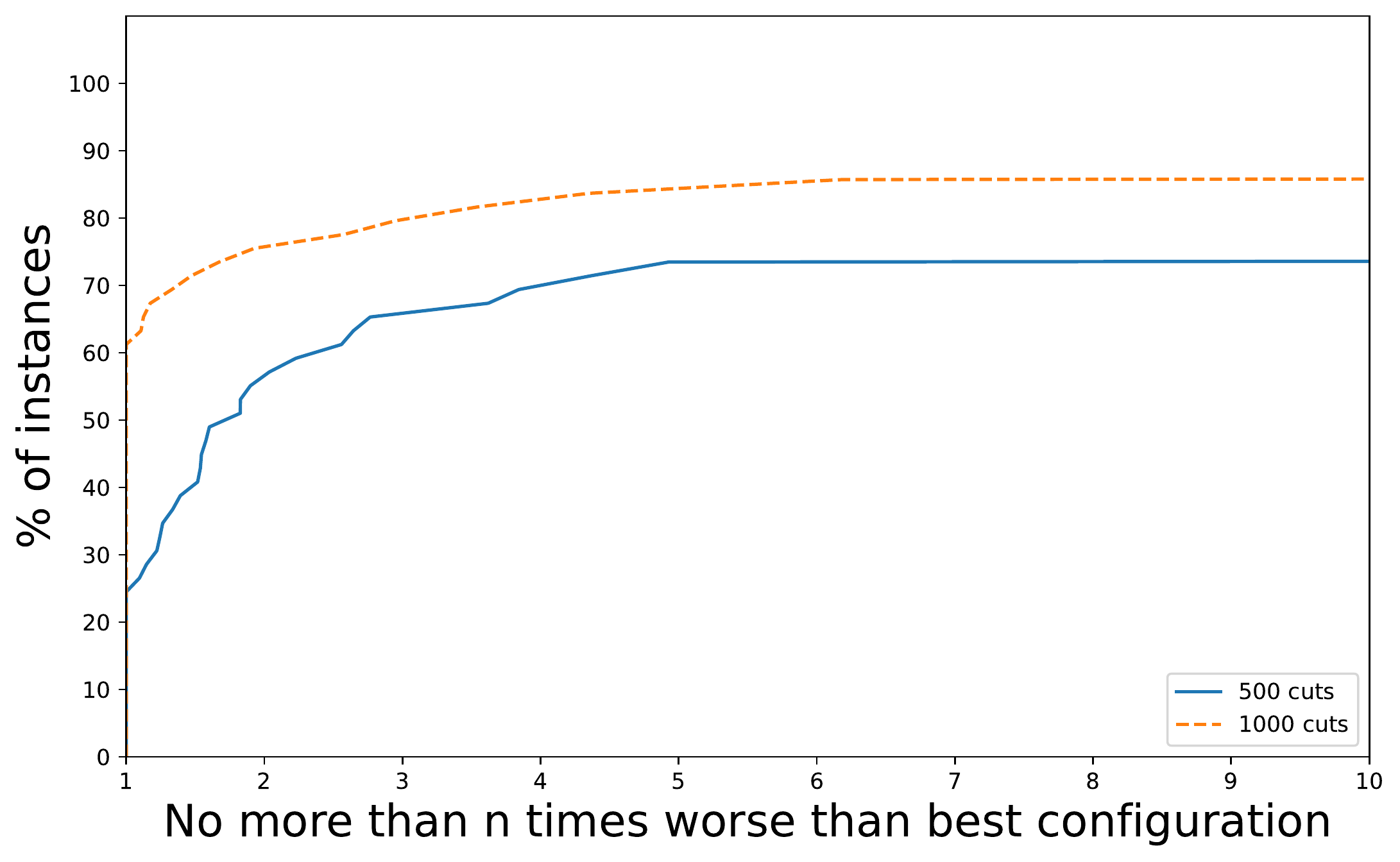}
\caption{Performance profile of different number of unavoidable set on $9 \times 9$ instances with augmented time}
\label{Fi:PerformanceProfile9by9Augmented}
        \end{center}
        \end{subfigure}
    \end{center}
    \caption{Experiment Results For Solving 9 by 9 instances}
\end{figure}
\begin{figure}
\begin{center}
        \includegraphics[width=0.4\textwidth]{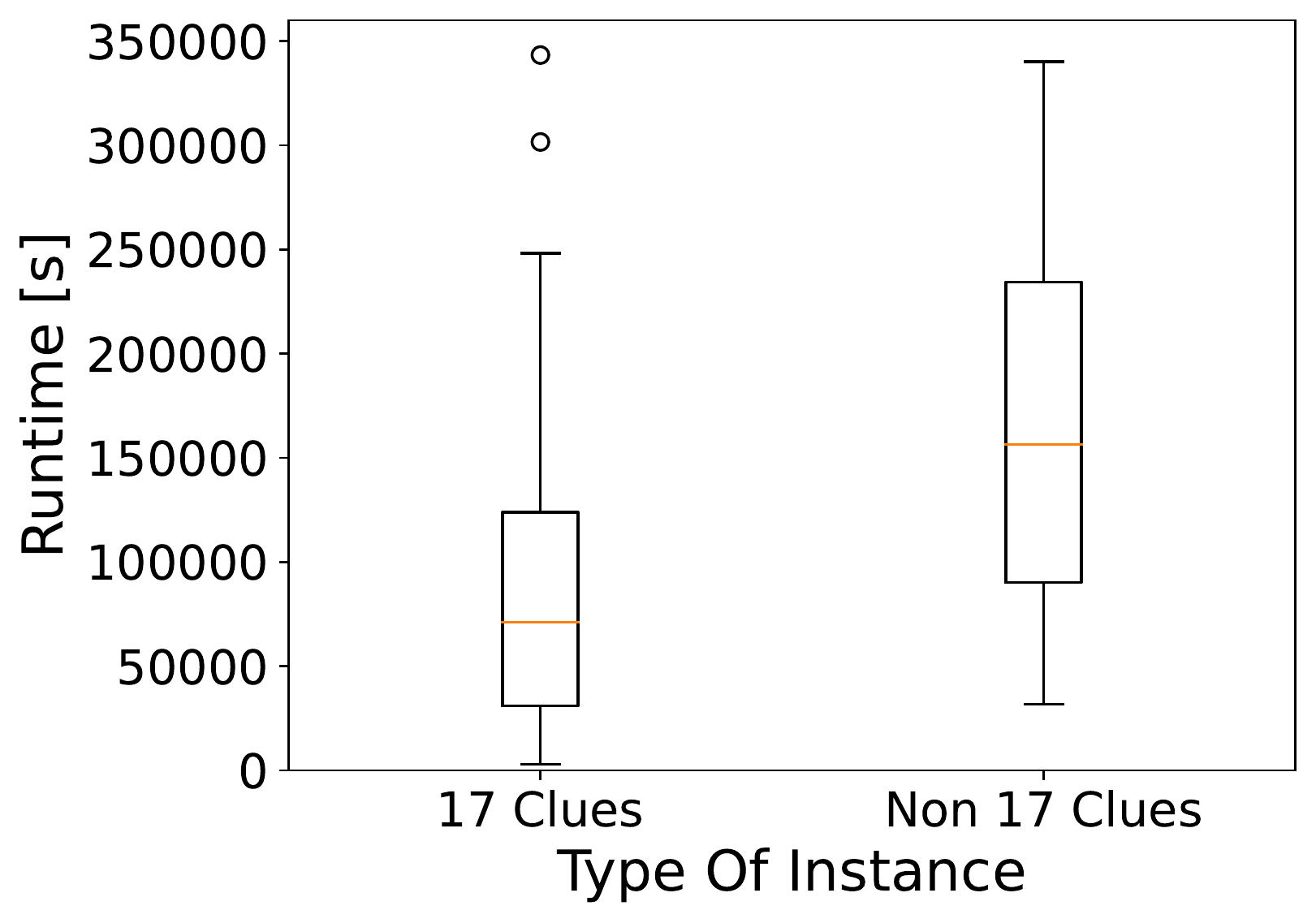}
\end{center}
\caption{Comparison Of Solving Time For 17 Clue Instances and Non 17 Clue Instances}
\label{Fi:BoxPlot17Clue}
\end{figure}
Finally, we take into account the time needed to generate the inequalities. Note that we only need to compare models with $500$ and $1000$ inequalities since $1000$ inequalities models outperform the $3000$ and $5000$ inequalities model cut models. To compare if it is worth generating the $500$ extra unavoidable sets, we compute the time needed to solve the bilevel instance plus the time needed to generate the unavoidable sets and plot the performance profile for instances with $500$ and $1000$ inequalities. The calculation is done by using the data from the generating unavoidable set experiments. The resulting plot is shown in \ref{Fi:PerformanceProfile9by9Augmented}. We see that even accounting for the unavoidable set cut generation time, using $1000$ inequalities is still superior to using $500$ inequalities.

For our experiments, we also obtained preliminary results with the \texttt{MiBS} solver~\cite{tS20}. Even on easy instances however, we quickly observed that \texttt{MiBS} required much more time than the solver from \cite{mF16}. We believe that this is in large part due to the inability of \texttt{MiBS} to find a primal solution to our problem. 

\section{Generalization of the Model to other Fewest Clue Problems}

A desire for unique solutions is not only relevant to Sudoku, with other example problems being Slither Link and Cross Sum~\cite{tY03}. This motivates the definition of the ``Fewest Clue Problem" (FCP) class in \cite{eD18}. In this section, we show how our model can also be adapted for FCP problems of other puzzles which have a linear binary formulation.

We restate the definition of the Fewest Clue Problem as in \cite{eD18}. Let $A$ be a problem in NP. We denote with $R_A$ the set of instance-certificate pairs where the certificates are binary strings of length $l$. For a given instance $I$ of $A$, we call a string $c \in \{0,1,\bot\}$ a \emph{clue} if there exists a certificate $c^*$ such that $(I,c^*) \in R_A$ and $c_i = c^*_i$ for all indices $i \in [k]$ where $c_i \neq \bot$. The symbol $\bot$ can be interpreted as a missing or non-specified entry.  We call $c^*$ a \emph{satisfying solution} to clue $c$. The \emph{size} of a clue is the number of non $\bot$ characters. 

We define $\mathrm{FCP}~A$ to be the decision problem: given an instance $I$, a certificate $c^*$ and an integer $k$, does there exist a clue $c$ of size at most $k$ for which the unique satisfying solution is $c^*$? We note that our definition is a slight variant of that proposed in \cite{eD18}.

We make the assumption that there exists an $l$-dimensional polytope $\mathcal{Q}$ such that $c$ is a valid certificate if and only if $c \in \mathcal{Q}$ and is binary.
The $\mathrm{FCP}~A$ can be written as a bilevel optimization problem as follows:
\begin{align*}
     \min_{x,y,z} \quad&\sum_{i=1}^l y_{i}\\
    \text{s.t.} \quad&z = 1 \\
    &y_{i} \in \{0,1\},\quad\forall\ i \in [l]\\
    &(x,z)\in S(y) 
\end{align*}
where $S(y)$ is the set of optimum solutions to the $y$-parameterized follower problem:
\begingroup
\allowdisplaybreaks
\begin{align*}
     \min_{x,z}\quad&z\\
    \text{s.t.}\quad& x\in \mathcal{Q}\\
    &x_{i} \geq y_{i}, \quad \forall\ i \in [l],\ c^*_i =1\\
    & \sum_{i\in [l], c_i^* = 1} x_{i} + \sum_{i\in [l], c_i^* = 0} (1-x_{i}) - z \leq l-1 \tag{$NG$}\\
    &x_{i},z \in \{0,1\}, \quad\forall\ i \in [l].
\end{align*}
\endgroup
The leader program determines which indices are given in the clue, while the follower tries to find an alternative solution respecting the clue.
Constraint~($NG$) is a no-good constraint prohibiting the assignment $x = c^*$ if $z = 0$.
It is trivially fulfilled if $z = 1$, it is a generalization of the equivalent constraint of the Sudoku-specific model presented in Section~\ref{sec:bilevel}.

\section{Conclusion and Outlook}

In this paper, we have shown that the Minimum Sudoku Clue problem can be formulated and solved as a binary bilevel linear programming problem. By introducing unavoidable-set inequalities, we showed that the formulation can be tightened, and that solver performance can be improved. Our models are able to compute a provable optimal solution to the Minimum Sudoku Clue problem in $95\%$ of instances. Despite these performance results, the inherent complexity of the Minimum Sudoku Clue problem and the more general Fewest Clue problem complicates scaling to larger instances. Unlike specialized ad hoc enumeration techniques developed in the Sudoku literature~\cite{d23} however, our approach naturally benefits from the continued improved performance of mixed-integer programming solvers. 

We see three main avenues of future research for the Minimum Sudoku Clue problem. First, we can use faster unavoidable set finding algorithms such as the one proposed by \cite{gM14}. Second, we can develop formulations that exploit the symmetries of Sudoku grids. Third, we can develop a branch-and-cut approach leveraging unavoidable set inequalities to separate non-feasible solutions throughout the branch-and-bound process instead of initially applying a large number of inequalities.

\section*{Acknowledgments}
The work for this article has been conducted in the Research Campus MODAL funded by the German Federal Ministry
of Education and Research (BMBF) (fund numbers 05M14ZAM, 05M20ZBM). The described research activities are
funded by the Federal Ministry for Economic Affairs and Energy within the project UNSEEN (ID: 03EI1004-C). We thank Markus Sinnl and coauthors for providing us a license to their bilevel solver, Fakultät II at the Technische Universität Berlin for allowing us to use their HPC facility, and Kai Hoppmann for initial advice on the integer formulation.

\bibliographystyle{siam}
\bibliography{bibliography.bib}

\begin{thebibliography}{10}

\bibitem{dB18}
{\sc D.~Berend}, {\em On the number of {S}udoku squares}, Discrete Mathematics,
  341 (2018), pp.~3241--3248.

\bibitem{bJ08}
{\sc J.~Bürger}, {\em {S}udoku {A}ssistant – an {A}{I} assistant combining
  machine learning and reasoning | {L}euven.{A}{I} {S}tories ---
  ai.kuleuven.be}.
\newblock \url{https://ai.kuleuven.be/stories/post/2023-02-08-sudoku/}, 2023.
\newblock [Accessed 24-Apr-2023].

\bibitem{lC14}
{\sc L.~C. Coelho and G.~Laporte}, {\em A comparison of several enumerative
  algorithms for {S}udoku}, Journal of the Operational Research Society, 65
  (2014), pp.~1602--1610.

\bibitem{jC14}
{\sc J.~Cooper and A.~Kirkpatrick}, {\em Critical sets for {Sudoku} and general
  graph colorings}, Discrete Mathematics, 315-316 (2014), pp.~112--119.

\bibitem{jD06}
{\sc J.-P. Delahaye}, {\em The science behind {S}udoku}, Scientific American,
  294 (2006), pp.~80--7.

\bibitem{eD18}
{\sc E.~D. Demaine, F.~Ma, A.~Schvartzman, E.~Waingarten, and S.~Aaronson},
  {\em The fewest clues problem}, Theoretical Computer Science, 748 (2018),
  pp.~28--39.

\bibitem{sD12}
{\sc S.~T. DeNegre}, {\em Interdiction and Discrete Bilevel Linear
  Programming}, PhD thesis, Lehigh University, 2011.

\bibitem{mD23}
{\sc M.~Deverin}, {\em {M}inlex {F}orm: {M}in and {M}ax {L}ists / {C}haining}.
\newblock
  \url{http://forum.enjoysudoku.com/minlex-form-min-and-max-lists-chaining-t30325-15.html}.
\newblock [Accessed 24-Apr-2023].

\bibitem{d23}
{\sc M.~Dobrichev}, {\em {G}rid{C}hecker, an exhaustive puzzle enumerator}.
\newblock
  \url{http://forum.enjoysudoku.com/gridchecker-an-exhaustive-puzzle-enumerator-t30071.html}.
\newblock [Accessed 26-Apr-2023].

\bibitem{eD02}
{\sc E.~D. Dolan and J.~J. Mor{\'{e}}}, {\em Benchmarking optimization software
  with performance profiles}, Mathematical Programming, 91 (2002),
  pp.~201--213.

\bibitem{bF06}
{\sc B.~Felgenhauer and F.~Jarvis}, {\em Mathematics of {S}udoku {I}},
  Mathematical Spectrum, 39 (2006), pp.~15--22.

\bibitem{mF17}
{\sc M.~Fischetti, I.~Ljubi{\'{c}}, M.~Monaci, and M.~Sinnl}, {\em A new
  general-purpose algorithm for mixed-integer bilevel linear programs},
  Operations Research, 65 (2017), pp.~1615--1637.

\bibitem{mF16}
\leavevmode\vrule height 2pt depth -1.6pt width 23pt, {\em On the use of
  intersection cuts for bilevel optimization}, Mathematical Programming, 172
  (2017), pp.~77--103.

\bibitem{aF13}
{\sc A.~Forrow and J.~R. Schmitt}, {\em Approaching the minimum number of clues
  {S}udoku problem via the polynomial method}, 2013.

\bibitem{aG17}
{\sc A.~Gainer-Dewar and P.~Vera-Licona}, {\em The minimal hitting set
  generation problem: algorithms and computation}, SIAM Journal on Discrete
  Mathematics, 31 (2017), pp.~63--100.

\bibitem{gurobi}
{\sc {Gurobi Optimization, LLC}}, {\em {Gurobi Optimizer Reference Manual}},
  2022.

\bibitem{aH07}
{\sc A.~M. Herzberg and M.~R. Murty}, {\em {S}udoku squares and chromatic
  polynomials}, Notices of the AMS, 54 (2007), pp.~708--717.

\bibitem{tH15}
{\sc T.~H{\"u}rlimann}, {\em Puzzles and Games: A Mathematical Modeling
  Approach}, Department of Informatics, University of Fribourg, Fribourg,
  Switzerland, 4th~ed., 2016.

\bibitem{cplex}
{\sc {I}BM}, {\em IBM ILOG CPLEX 12.8 User’s Manual}, IBM ILOG CPLEX
  Division, Incline Village, NV, 2017.

\bibitem{rJ85}
{\sc R.~G. Jeroslow}, {\em The polynomial hierarchy and a simple model for
  competitive analysis}, Mathematical programming, 32 (1985), pp.~146--164.

\bibitem{tK05}
{\sc T.~Koch}, {\em Rapid {Mathematical} {Programming} or {How} to {Solve}
  {Sudoku} {Puzzles} in a {Few} {Seconds}}, in Operations {Research}
  {Proceedings} 2005, H.-D. Haasis, H.~Kopfer, and J.~Schönberger, eds.,
  Berlin, Heidelberg, 2006, Springer Berlin Heidelberg, pp.~21--26.

\bibitem{cL12}
{\sc C.~Lass}, {\em Minimal number of clues for sudokus}, Open Computer
  Science, 2 (2012), pp.~143--151.

\bibitem{gL22}
{\sc G.-C. Lau, J.~M. Jeyaseeli, W.-C. Shiu, and S.~Arumugam}, {\em Sudoku
  number of graphs}, arXiv preprint arXiv:2206.08106,  (2022).

\bibitem{dM22}
{\sc D.~Ma{\v{s}}ulovi{\'c}}, {\em Deducibility in {S}udoku}, arXiv preprint
  arXiv:2212.01053,  (2022).

\bibitem{gM14}
{\sc G.~McGuire, B.~Tugemann, and G.~Civario}, {\em There is no 16-clue
  {S}udoku: Solving the {S}udoku minimum number of clues problem via hitting
  set enumeration}, Experimental Mathematics, 23 (2014), pp.~190--217.

\bibitem{bM05}
{\sc B.~D. McKay and I.~M. Wanless}, {\em On the number of {L}atin squares},
  Annals of combinatorics, 9 (2005), pp.~335--344.

\bibitem{gR05}
{\sc G.~Royle}, {\em {M}inimum {S}udoku}.
\newblock
  \url{https://web.archive.org/web/20120730100322/http://mapleta.maths.uwa.edu.au/~gordon/sudokumin.php}.
\newblock [Accessed 23-Apr-2023].

\bibitem{gR17clue}
\leavevmode\vrule height 2pt depth -1.6pt width 23pt, {\em Minimum {Sudoku}}.
\newblock https://web.archive.org/web/20060220092603/http://w
  ww.csse.uwa.edu.au/$\sim$gordon/sudokumin.php.

\bibitem{eR06}
{\sc E.~Russell and F.~Jarvis}, {\em Mathematics of {S}udoku {II}},
  Mathematical Spectrum, 39 (2006), pp.~54--58.

\bibitem{tS20}
{\sc S.~Tahernejad, T.~K. Ralphs, and S.~T. DeNegre}, {\em A branch-and-cut
  algorithm for mixed integer bilevel linear optimization problems and its
  implementation}, Mathematical Programming Computation, 12 (2020),
  pp.~529--568.

\bibitem{gT22}
{\sc G.~K. Tjusila}, {\em {A Bilevel Model for the Minimum {S}udoku Clue
  Problem}}, {B}achelor's {T}hesis, Technische Universit\"at Berlin, 2022.
\newblock available on
  \url{https://github.com/gtjusila/minimum-sudoku/blob/main/Bachelorarbeit_Gennesaret_Tjusila_407687.pdf}.

\bibitem{tY03}
{\sc T.~Yato and T.~Seta}, {\em Complexity and completeness of finding another
  solution and its application to puzzles}, IEICE TRANSACTIONS on Fundamentals
  of Electronics, 86 (2003), pp.~1052--1060.

\end{thebibliography}

\end{document}